\documentclass[11pt]{amsart}
\usepackage{epsfig}
\usepackage{graphicx}
\usepackage{amscd}
\usepackage{amsmath}
\usepackage{amsxtra}
\usepackage{amsfonts}
\usepackage{amssymb}

\oddsidemargin  0.0in
	\evensidemargin 0.0in
	\textwidth      6.5in
	\headheight     0.0in
	\topmargin      0.0in

\newtheorem{theorem}{Theorem}[section]

\newtheorem{proposition}[theorem]{Proposition}

\theoremstyle{definition}
\newtheorem{definition}[theorem]{Definition}
\newtheorem{remark}[theorem]{Remark}

\newtheorem{example}[theorem]{Example}
\theoremstyle{remark}

\renewcommand{\theclaim}{\textup{\theclaim}}

\newtheorem*{acknowledgements}{Acknowledgements}

\numberwithin{equation}{section}

\def\openone

{\mathchoice

{\hbox{\upshape \small1\kern-3.3pt\normalsize1}}

{\hbox{\upshape \small1\kern-3.3pt\normalsize1}}

{\hbox{\upshape \tiny1\kern-2.3pt\SMALL1}}

{\hbox{\upshape \Tiny1\kern-2pt\tiny1}}}

\makeatletter

\newbox\ipbox

\newcommand{\ip}[2]{\left\langle #1\, , \,#2\right\rangle}
\newcommand{\diracb}[1]{\left\langle #1\mathrel{\mathchoice

{\setbox\ipbox=\hbox{$\displaystyle \left\langle\mathstrut
#1\right.$}

\vrule height\ht\ipbox width0.25pt depth\dp\ipbox}

{\setbox\ipbox=\hbox{$\textstyle \left\langle\mathstrut
#1\right.$}

\vrule height\ht\ipbox width0.25pt depth\dp\ipbox}

{\setbox\ipbox=\hbox{$\scriptstyle \left\langle\mathstrut
#1\right.$}

\vrule height\ht\ipbox width0.25pt depth\dp\ipbox}

{\setbox\ipbox=\hbox{$\scriptscriptstyle \left\langle\mathstrut
#1\right.$}

\vrule height\ht\ipbox width0.25pt depth\dp\ipbox}

}\right. }

\newcommand{\dirack}[1]{\left. \mathrel{\mathchoice

{\setbox\ipbox=\hbox{$\displaystyle \left.\mathstrut
#1\right\rangle$}

\vrule height\ht\ipbox width0.25pt depth\dp\ipbox}

{\setbox\ipbox=\hbox{$\textstyle \left.\mathstrut
#1\right\rangle$}

\vrule height\ht\ipbox width0.25pt depth\dp\ipbox}

{\setbox\ipbox=\hbox{$\scriptstyle \left.\mathstrut
#1\right\rangle$}

\vrule height\ht\ipbox width0.25pt depth\dp\ipbox}

{\setbox\ipbox=\hbox{$\scriptscriptstyle \left.\mathstrut
#1\right\rangle$}

\vrule height\ht\ipbox width0.25pt depth\dp\ipbox}

} #1\right\rangle}

\newcommand{\cj}[1]{\overline{#1}}

\newcommand{\bz}{\mathbb{Z}}

\newcommand{\br}{\mathbb{R}}

\newcommand{\bt}{\mathbb{T}}
\newcommand{\bn}{\mathbb{N}}

\def\blfootnote{\xdef\@thefnmark{}\@footnotetext}


\hyphenation{wave-lets}\hyphenation{ in-fi-nite}\hyphenation{ con-vo-lu-tion}

\input xy
\xyoption{all}
\usepackage{amssymb}




\def\F{\mathcal{F}}

\def\-{^{-1}}








\begin{document}

\title[Isospectral measures]{Isospectral measures}
\author{Dorin Ervin Dutkay}
\blfootnote{}
\address{[Dorin Ervin Dutkay] University of Central Florida\\
	Department of Mathematics\\
	4000 Central Florida Blvd.\\
	P.O. Box 161364\\
	Orlando, FL 32816-1364\\
U.S.A.\\} \email{ddutkay@mail.ucf.edu}

\author{Palle E.T. Jorgensen}
\address{[Palle E.T. Jorgensen]University of Iowa\\
Department of Mathematics\\
14 MacLean Hall\\
Iowa City, IA 52242-1419\\}\email{jorgen@math.uiowa.edu}

\thanks{} 
\subjclass[2000]{}
\keywords{}

\begin{abstract}
In recent papers a number of authors have considered Borel probability measures $\mu$ in $\br^d$ such that the Hilbert space $L^2(\mu)$ has a Fourier basis (orthogonal) of complex exponentials. If $\mu$ satisfies this property, the set of frequencies in this set is called a spectrum for $\mu$. Here we fix a spectrum, say $\Gamma$, and we study the possibilities for measures $\mu$ having $\Gamma$ as spectrum.  
\end{abstract}
\maketitle \tableofcontents

\section{Introduction}
  We consider a spectral analysis of families of singular measures, introducing pairs $(\mu, \Gamma)$ where $\mu$ is a measure, and where $\Gamma$ is a set which serves as  spectrum for $\mu$; see the definition below. We refer to these as spectral pairs. While the measures arising this way have a special flavor, they are nonetheless useful in the analysis of models arising in a host of different areas.

We are motivated in part by a renewed interest in families of singular measures, driven in  turn both by  applications, and by current problems in spectral theory and  geometric measure theory. The applications include Schroedinger operators from physics, especially their scattering theory \cite{An09, Ab08, FLZ08}.  In these problems, it is helpful to have at hand concrete model-examples involving measures amenable to direct computations.  In stochastic processes and stochastic integration, key tools depend on underlying spectral densities. For problems involving fluctuations and chaotic dynamics, the measures are often singular, and model-measures are helpful, see e.g.,  \cite{Lu10, BoPo09, GiVl09, LaRa09, AlLe10, AlBe09}. In determining the nature of orbits in ergodic theory, the first question is often ``what is the spectral type?'' The measures in these applications are typically not compactly supported. Nonetheless, there is a procedure from geometric measure theory which produces compactly supported measures, see \cite{Hut81}, and much of the earlier literature has focused on measures of compact support. Our results below show that non-compactly supported measures arise in every spectral pair.

We consider Borel probability measures $\mu$ in $\br^d$ such that the Hilbert space $L^2(\mu)$ has a Fourier basis (orthogonal, ONB for short) of complex exponentials; for $\lambda\in\br^d$, we use the notation
$$e_\lambda(x):=e^{2\pi i\lambda\cdot x},\quad(x\in\br^d).$$ If $\mu$ satisfies this property, the set of frequencies in an ONB are called a {\it spectrum} for $\mu$. Here we fix a spectrum, say $\Gamma$, and we study the possibilities for measures $\mu$ having $\Gamma$ as spectrum.
The reader will notice that there are examples with Lebesgue measure restricted to the unit interval based on translation congruence. 
But it is of interest to extend these constructions to the general case (including cases with non-compact support but spectral). The more traditional spectral pairs $(\mu, \Gamma)$ typically come from contractive affine iterated function systems, and these measures $\mu$ will automatically be compactly supported in $\br^d$.
 
Here we explore the general theory, and we find very general and varied families: a rich family of iso-spectral fractals. There is a number of reasons that it is of interest to explore such; details below!
 
To help the reader understand the ideas, consider Borel probability measures that are iso-spectral; i.e., every measure in the family has the same spectrum, say $\Gamma$. Specifically, we fix a spectrum. What are natural families of Borel probability measures with this spectrum?
 
Indeed, consider the most general spectral pair $(\mu, \Gamma)$ in $\br^d$; and then develop algorithms yielding indexed families of measures $(\mu_a)$ with the index $a$ in a specific index set $A$. A natural choice for $A$ is a suitable family of partitions of a basepoint measure $\mu$ in the family.  We find extensive families, but there are probably other bigger and intriguing families of iso-spectral measures. 
 
We elaborate on the set $A$ below in Theorem \ref{th1.2}. The two steps in the algorithm consist in choosing measurable partitions of the $d$-cube (in $\br^d$) and translations by $\bz^d$ defining a translation congruence; see Definition \ref{def1.3}. The set $A$ of all translation congruences labels the iso-spectral measures. 
 
In the simplest case, take  $a=$ the trivial partition; and this yields back $\mu$ itself, up to a translation.
There are at least four reasons this is of interest: 
 \begin{itemize}
 \item[(a)] There is a big classical literature going back to Mark Kac's question: ``Can one hear the shape of a drum?'' 
In our context, we will be considering the Fourier transform of $\mu$ as a function on $\br^d$, and the possible drums will be iso-spectral data, typically iso-spectral fractals, or fractal measures. The idea of making connections between geometric features and spectral theoretic data, of course dates back to Fourier, but it was made popular by Mark Kac in \cite{Ka66}. While Kac had in mind a Laplacian on a planar domain, the question has generated a host of formulations involving various forms of spectral data, and various geometries; see for example \cite{La08} and the references cited there. Here we are concerned with Fourier frequencies on one side of the divide, and geometric measure theory on the other. 
 
\item[(b)] The Fourier transform of $\mu_a$, $F_a(t)= \widehat\mu_a(t)$, $t\in\br^d$ is interesting as we vary $a\in A$. We can get $F_a(t)$ non-differentiable; and anything in between continuous and entire analytic. In a more fundamental setting, the problem of recovering a function of a measure from a Fourier transform lies at the root of obstacle scattering, but in a classical context, the paper \cite{HD63} illustrates some of the subtleties.
\item[(c)] And going back to Paley-Wiener there is much literature on the interplay between the possibility of analytic continuations of geometric Fourier transforms and the geometry itself. Here, by Paley-Wiener, we mean questions dealing with asymptotic estimates on a complex Fourier transform. The classical literature includes \cite{PZ09} and papers cited there, but, by contrast, there are relatively few parallel results dealing with classes of fractal measures.
 \item[(d)] There is an analogy of these families to wavelet sets that play an important role in the spectral theory of wavelets in higher dimensions. Here we refer to tiling properties for wavelet sets, see e.g., \cite{Me08}. This is only a parallel as wavelet sets involve two operations, translation and scaling. Our focus here is on translations by integer lattices.
 \end{itemize}

We further discuss applications as they relate to tiling properties of the integer lattices $\bz^d$, both for $d = 1$, as well as for higher dimensions. The examples we include are from the theory of wavelets and from harmonic analysis of affine fractals.

\section{Main results}
While it is not true in general, for given measures $\mu$,  that the Hilbert space  $L^2(\mu)$ is amenable to Fourier analysis, at least not in a direct way, in earlier papers, e.g., \cite{DHJ09, DuJo07a, JoPe98} the notion of spectrum was introduced for the analysis of certain singular measures $\mu$. Here our aim is to fix sets $\Gamma$ that  serve as spectrum, and ask for the variety of measures $\mu$ that have $\Gamma$ as their spectrum. In fact, in the setting from \cite{DuJo09a} our Theorem \ref{th1.1} offers a rather complete answer to this question.

 Our study of spectral pairs $(\mu, \Gamma)$ extends the more familiar theory of Pontryagin duality for locally compact abelian groups. 
For other results on spectral pairs, the reader may wish to consult  \cite{JoPe96, JoPe98, DuJo09a, DuJo09b, DuJo07a, DuJo07b, DuJo08a, DuJo08b}. For recent related results see also \cite{JKS07} and \cite{JKS08}.
The simplest instance of interesting spectral pairs include the compact $d$-torus  $\bt^d$ and its Fourier dual the rank-$d$ lattice  $\bz^d$,  the setting of multivariable Fourier series. In this context, the required and more standard Fourier tools for $d=1$ do carry over to $d > 1$. Instead one may rely on the canonical Fourier duality of the $d$-torus $\bt^d = \br^d/\bz^d$    and $\bz^d$; see \cite{Rud90}. It will be convenient to model $\bt^d$ as the $d$-cube in $\br^d$, i.e., as $Q_d := I^d$, where $I := [0, 1)$.
 
      There are many differences between classical multivariable Fourier analysis (e.g., \cite{Rud90}) on the one hand, and spectral pairs $(\mu, \Gamma)$ on the other; for example this: the absence of groups in the context of general spectral pairs. Indeed, typically for general spectral pairs, neither of the two sets in the pair, the support of the measure nor its spectrum, is a group.

     Nonetheless, there are important spectral theoretic questions for those particular spectral pairs where the measure $\mu$ is $d$-dimensional Lebesgue measure restricted to $Q_d$. There are several questions here; first: What sets $\Gamma$ in $\br^d$ make $(Q_d, \Gamma)$ a spectral pair? The answer was found in the two papers \cite{JoPe99, IoPe98}. These authors proved that a discrete subset $\Gamma$ in $\br^d$ is a spectrum for $Q_d$ if and only if it tiles $\br^d$ by translations of $Q_d$.
      
This spectral/tile duality in fact is a part of a wider duality theory. It was initiated in \cite{JoPe99}, and further studied in the follow-up paper \cite{IoPe98}. The first one of the two papers introduced the problem, solved it and listed the possibilities for $d<4$. The second proved the theorem in general, but without classification. Higher dimensions are of interest because of the existence of exotic ``cube-tilings'' in $\br^d$ for $d = 10$ and higher, see \cite{LaSh92}, found by Lagarias and Shor.
      Our purpose here is to turn the question around: Rather than fixing one part in a particular spectral pair, in this case $Q_d$, instead we pick the simplest spectrum $\Gamma = \bz^d$, and we then ask what are the possibilities for measures $\mu$ in spectral pairs $(\mu, \bz^d)$. The theorem below answers this question!

\begin{theorem}\label{th1.1}
Let $\mu$ be a Borel probability measure on $\br^d$. The following statements are equivalent:
\begin{enumerate}
\item The set $\{e_n : n\in\bz^d\}$ forms an orthonormal set in $L^2(\mu)$.
\item There exists a bounded measurable function $\varphi\geq 0$ that satisfies
\begin{equation}
\sum_{k\in\bz^d}\varphi(x+k)=1,\mbox{ for Lebesgue a.e. }x\in\br^d,
\label{eq1.1}
\end{equation}
such that $d\mu=\varphi\,dx$. 
\end{enumerate}
\end{theorem}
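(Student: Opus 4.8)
The plan is to pass to the torus by periodization and then invoke uniqueness of Fourier coefficients. First I would rewrite the orthonormality in (1): since $\langle e_n,e_m\rangle_{L^2(\mu)}=\int_{\br^d}e^{2\pi i(n-m)\cdot x}\,d\mu(x)$, statement (1) is equivalent to the single family of conditions $\int_{\br^d}e^{2\pi i k\cdot x}\,d\mu(x)=\delta_{k,0}$ for all $k\in\bz^d$ (the $k=0$ case being automatic, as $\mu$ is a probability measure). This is the only way the hypothesis enters, so everything reduces to analyzing these integrals.

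Next, because each integrand $e^{2\pi i k\cdot x}$ is $\bz^d$-periodic, these integrals depend only on the pushforward $\tilde\mu$ of $\mu$ under the quotient map $q\colon\br^d\to\bt^d=\br^d/\bz^d$, realized on the fundamental domain $Q_d$. The numbers $\int_{\br^d}e^{2\pi i k\cdot x}\,d\mu=\int_{Q_d}e^{2\pi i k\cdot x}\,d\tilde\mu$ are precisely the Fourier coefficients of the finite measure $\tilde\mu$ on $\bt^d$. Hence (1) asserts that $\tilde\mu$ and normalized Lebesgue measure $dx|_{Q_d}$ have identical Fourier coefficients, and by the uniqueness theorem for Fourier coefficients on the torus I would conclude $\tilde\mu=dx|_{Q_d}$.

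The crux is to upgrade absolute continuity of the periodization to absolute continuity of $\mu$ itself. I would use the Lebesgue decomposition $\mu=\mu_{ac}+\mu_s$ with $\mu_s$ concentrated on a Lebesgue-null set $N\subseteq\br^d$. Periodizing, $\tilde\mu_s$ is concentrated on $q(N)$, which equals the countable union $\bigcup_{k\in\bz^d}\bigl((N\cap(Q_d+k))-k\bigr)$ of translates of subsets of $N$, hence is still Lebesgue-null; so $\tilde\mu_s\perp dx$ while $\tilde\mu_{ac}\ll dx$. Since $\tilde\mu=dx|_{Q_d}$ is absolutely continuous, uniqueness of the Lebesgue decomposition forces $\tilde\mu_s=0$, i.e.\ $\mu_s=0$. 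Thus $d\mu=\varphi\,dx$ for a nonnegative density $\varphi$. Unfolding the identity $\int_{Q_d}f\,dx=\int_{\br^d}f\varphi\,dx=\int_{Q_d}f(x)\sum_{k\in\bz^d}\varphi(x+k)\,dx$ (Tonelli, using $\varphi\ge0$) against all bounded $\bz^d$-periodic $f$ yields the tiling equation \eqref{eq1.1}; boundedness of $\varphi$ is then automatic, since $\sum_k\varphi(x+k)=1$ with nonnegative summands forces $0\le\varphi\le1$ a.e.

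The converse $(2)\Rightarrow(1)$ is a direct computation: for $k\in\bz^d$, periodicity of $e^{2\pi i k\cdot x}$ together with \eqref{eq1.1} gives $\int_{\br^d}e^{2\pi i k\cdot x}\varphi(x)\,dx=\int_{Q_d}e^{2\pi i k\cdot x}\sum_{j\in\bz^d}\varphi(x+j)\,dx=\delta_{k,0}$, which simultaneously shows that $\mu$ has total mass $1$ (the case $k=0$) and that $\{e_n:n\in\bz^d\}$ is orthonormal in $L^2(\mu)$. The only genuine obstacle is the absolute-continuity step; the remaining arguments are reformulation, the Fourier uniqueness appeal, and an unfolding via Tonelli's theorem.
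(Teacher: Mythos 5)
Your proposal is correct and follows essentially the same route as the paper: periodize $\mu$ to the torus, identify the periodization with Lebesgue measure on $Q$ via uniqueness of Fourier coefficients (the paper invokes Stone--Weierstrass for the same purpose), deduce absolute continuity of $\mu$ from that of its periodization, and unfold to obtain \eqref{eq1.1}. Your Lebesgue-decomposition phrasing of the absolute-continuity step is just a slightly more structured version of the paper's direct contradiction with a null set of positive $\mu$-measure; the content is identical.
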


\begin{proof}
Let $Q:=[0,1)^d$.
Assume (ii). Then, for $n\in\bz^d$
$$\int_{\br^d}e_n(x)\,d\mu(x)=\int_{\br^d}e_n(x)\varphi(x)\,dx=\sum_{k\in\bz^d}\int_Q\varphi(x+k)e_n(x+k)\,dx=$$$$\int_Qe_n(x)\left(\sum_{k\in\bz^d}\varphi(x+k)\right)\,dx=\int_Qe_n(x)\,dx=\left\{\begin{array}{cc}
1,&\mbox{ if }n=0,\\
0,&\mbox{ if }n\neq 0.\end{array}\right.$$
This proves (i).

Assume (i). Define the measure $\tilde\mu$ on $Q$ as follows: for all Borel subsets $E$ of $Q$:
\begin{equation}
\tilde\mu(E)=\sum_{k\in\bz^d}\mu(E+k).
\label{eq1.2}
\end{equation}
Approximating uniformly any continuous functions on $[0,1)^d$ by step functions we obtain 
\begin{equation}
\int_Q f\,d\tilde\mu=\sum_{k\in\bz^d}\int_{Q+k}f(x-k)\,d\mu(x),\mbox{ for all continuous functions $f$ on $Q$.}
\label{eq1.3}
\end{equation}

Then 
$$\int_Q e_n\,d\tilde\mu=\sum_{k\in\bz^d}\int_{Q+k} e_n(x-k)\,d\mu(x)=\sum_{k\in\bz^d}\int_{Q+k}e_n(x)\,d\mu=\int_{\br^d}e_n(x)\,d\mu(x)=\delta_n,$$
by assumption. 

But then, using the Stone-Weierstrass theorem, it follows that $\tilde\mu$ must coincide with the Lebesgue measure on $Q$ on continuous functions, so they are the same.

Next, we prove that $\mu$ is absolutely continuous w.r.t the Lebesgue measure on $\br^d$. Suppose there is a Borel set $E$ of Lebesgue measure zero with $\mu(E)>0$. Then for some $k\in\bz^d$, $\mu(E\cap(Q+k))>0$. Then define $F:=(E\cap(Q+k)))-k \subset Q$. We will have that $F$ has Lebesgue measure zero, but $\tilde\mu(F)>\mu(F+k)>0$, which contradicts the fact that $\tilde\mu$ is Lebesgue measure on $Q$. 

Thus $\mu$ is absolutely continuous w.r.t. Lebesgue measure $\lambda$ on $\br^d$. Let $\varphi$ be the Radon-Nykodim derivative $d\mu/dx$. 
We have 
$$\lambda(E)=\tilde\mu(E)=\sum_{k\in\bz^d}\mu(E+k)=\sum_{k\in\bz^d}\int_{Q+k}\varphi(x)\chi_E(x-k)\,dx=\sum_{k\in\bz^d}\int_Q\varphi(x+k)\chi_E(x)\,d\mu(x)=$$$$
\int_Q\chi_E(x)\left(\sum_{k\in\bz^d}\varphi(x+k)\right)\,dx.$$
This implies \eqref{eq1.1} and (ii).

\end{proof}

\begin{remark}
There are simple examples of measures for which $\bz$ yields orthogonal set of exponentials but which are not complete.

Consider the measure $\mu=\frac12\lambda|_{[0,2)}$. Note that $\varphi:=\frac12\chi_{[0,2)}$ satisfies (ii) in Theorem \ref{th1.1}. So $\{e_n : n\in\bz\}$ is orthonormal in $L^2(\mu)$. Actually more is true, and this can be seen by a simple rescale by 2: the measure $\mu$ has spectrum $\frac12\bz$, so in particular $\{e_n : n\in\bz\}$ is an orthonormal set in $L^2(\mu)$, but incomplete. The function $\varphi$ in this case is $\varphi=\frac12\chi_{[0,2)}$. The question is: what extra conditions have to be imposed on $\varphi$ so that $\{e_n : n\in\bz\}$ is complete?
\end{remark}

\begin{definition}\label{def1.3}
We say that a Borel subset $E$ of $\br^d$ is {\it translation congruent} to $Q=[0,1)^d$ if there exists a measurable partition $\{E_k : k\in\bz^d\}$ of $Q$ such that
$$E=\bigcup_{k\in\bz^d}(E_k+k).$$
\end{definition}

Our theme is this:  fix a set $\Gamma$ arising as a spectrum. We ask for the variety of measures $\mu$ that have $\Gamma$ as their spectrum. The result below answers the question for the special case when $\Gamma = \bz^d$. The relevance of this setting is illustrated by the problems in the papers \cite{JoPe99, LaSh92}. The question in \cite{JoPe99}, inspired in part by \cite{LaSh92}, deals with the possibility of spectral pairs when one term in the pair is the $d$-cube $Q$ in $\br^d$. The classification of the spectra was found for small $d$. The authors of \cite{JoPe99} further suggested that spectra have the additional property that they are translation sets for additive tilings. Our result (Theorem \ref{th1.2}) about translation congruence offers a possible answer to this. 
\begin{theorem}\label{th1.2}
Let $\mu$ be a Borel probability measure on $\br^d$. Then $\mu$ has spectrum $\bz^d$ iff $\mu$ is the Lebesgue measure restricted to a set $E$ which is translation congruent to $Q$. 
\end{theorem}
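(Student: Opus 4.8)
The plan is to lean on Theorem \ref{th1.1} in both directions, noting that ``$\mu$ has spectrum $\bz^d$'' means $\{e_n : n\in\bz^d\}$ is not merely orthonormal but also \emph{complete} in $L^2(\mu)$. Theorem \ref{th1.1} already pins down orthonormality: it forces $d\mu=\varphi\,dx$ with $\varphi\geq 0$ bounded and $\sum_{k\in\bz^d}\varphi(x+k)=1$ a.e. Both the hypothesis and the desired conclusion entail orthonormality (for the conclusion, $\varphi=\chi_E$ satisfies \eqref{eq1.1} precisely because the pieces $E_k$ partition $Q$, so exactly one translate $x+k$ lands in $E$ for each $x\in Q$), so in each direction I may assume this density form and concentrate entirely on completeness.

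For sufficiency, suppose $\mu=\lambda|_E$ with $E$ translation congruent to $Q$ via a partition $\{E_k\}$ as in Definition \ref{def1.3}. I would build the ``folding'' bijection $\tau:E\to Q$ sending $x\in E_k+k$ to $x-k$; since the $Q+k$ tile $\br^d$ and the $E_k$ partition $Q$, $\tau$ is a measure-preserving bijection (mod null sets), so $Wf=f\circ\tau$ defines a unitary $L^2(Q)\to L^2(E)$. The key computation is that for $x\in E_k+k$ one has $(We_n)(x)=e_n(x-k)=e_n(x)e^{-2\pi i n\cdot k}=e_n(x)$, because $n\cdot k\in\bz$; hence $W$ carries each Fourier exponential $e_n$ on $Q$ to the exponential $e_n$ on $E$. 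Since $\{e_n\}$ is an ONB of $L^2(Q)$ and $W$ is unitary, $\{e_n\}$ is an ONB of $L^2(E)=L^2(\mu)$, i.e. $\bz^d$ is a spectrum.

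For necessity, assume $\{e_n\}$ is an ONB of $L^2(\mu)$ and write $d\mu=\varphi\,dx$ as above. I would first recast completeness through periodization: for $f\in L^2(\mu)$ one has $\int f\,\overline{e_n}\,d\mu=\int_{\br^d}(f\varphi)(x)e^{-2\pi i n\cdot x}\,dx$, which is the $n$-th Fourier coefficient of the periodized function $G(x):=\sum_{k}(f\varphi)(x+k)$ on $Q$. Thus $f$ is orthogonal to every $e_n$ iff $\sum_k f(x+k)\varphi(x+k)=0$ for a.e. $x\in Q$, and completeness says this forces $f\varphi=0$ a.e. Now I analyze fiber by fiber: for a.e. $x\in Q$ the numbers $p_k(x):=\varphi(x+k)$ form a probability vector on $\bz^d$. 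If on a set of positive measure two indices were simultaneously positive, I could select (measurably, after decomposing $Q$ according to which ordered pair $(k_1,k_2)$ occurs) a function $f$ with $f(x+k_1)=p_{k_2}(x)$, $f(x+k_2)=-p_{k_1}(x)$ and $f=0$ elsewhere; this lies in $L^2(\mu)$, annihilates every $e_n$, yet is nonzero on a set of positive $\mu$-measure, contradicting completeness. Hence for a.e. $x$ exactly one $p_k(x)$ is nonzero, and by \eqref{eq1.1} it equals $1$; therefore $\varphi\in\{0,1\}$ a.e., so $\varphi=\chi_E$ with $E:=\{\varphi=1\}$. Setting $E_k:=\{x\in Q : x+k\in E\}$ yields a measurable partition of $Q$ (exactly one $k$ per fiber) with $E=\bigcup_k(E_k+k)$, i.e. $E$ is translation congruent to $Q$ and $\mu=\lambda|_E$.

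The main obstacle I anticipate is the necessity direction, specifically converting the pointwise ``at most one positive fiber value'' statement into a rigorous contradiction: one must produce a single measurable $f$ on $\br^d$ witnessing the failure of completeness, which requires partitioning the bad subset of $Q$ by the relevant index pairs and verifying both that the resulting $f$ genuinely lies in $L^2(\mu)$ (here the normalization $\sum_k\varphi(x+k)=1$ forces each $p_k\le 1$, keeping $f$ square-integrable) and that it is nontrivial in $L^2(\mu)$. The sufficiency direction, by contrast, is essentially the clean unitary-folding argument once the congruence partition is in hand.
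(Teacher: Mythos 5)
Your proof is correct, and while the sufficiency direction is essentially the paper's argument (the same folding unitary $L^2(Q)\to L^2(E)$ carrying $e_n$ to $e_n$), your necessity direction takes a genuinely different and more self-contained route. The paper writes $c_\varphi(t)=\sum_n|\widehat\varphi(t+n)|^2$, invokes Poisson summation to identify $c_\varphi(t)=\int_Q|F_\varphi(t,x)|^2\,dx$ with $F_\varphi(t,x)=\sum_k e_k(t)\varphi(x+k)$, cites the external criterion that $\bz^d$ is a spectrum iff $c_\varphi\equiv 1$, and then extracts ``at most one nonzero fiber value'' from the equality case of the triangle inequality together with a choice of $t$ with irrational coordinates. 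You instead characterize completeness directly: $\ip{f}{e_n}$ is the $n$-th Fourier coefficient of the periodization $\sum_k(f\varphi)(\cdot+k)$, so completeness says that vanishing of this periodization forces $f\varphi=0$; if two fibers $p_{k_1},p_{k_2}$ were simultaneously positive on a set of positive measure you exhibit the explicit nonzero annihilator $f(x+k_1)=p_{k_2}(x)$, $f(x+k_2)=-p_{k_1}(x)$. This avoids both Poisson summation and the cited spectral criterion, at the cost of having to handle the measurable selection of the bad index pair (routine, since $\bz^d\times\bz^d$ is countable) and the $L^1$-uniqueness theorem for Fourier coefficients on $\bt^d$. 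The paper's route, by contrast, plugs into the standard $\sum_n|\widehat\mu(t+n)|^2=1$ machinery that generalizes to other candidate spectra. The only loose ends in your write-up are cosmetic: the sets $E_k$ you produce partition $Q$ only up to a Lebesgue-null set (fixable by absorbing the null set into one $E_k$ and adjusting $E$ by a null set, which does not change $\lambda|_E$), and you should note that $S'+k_1$ and $S'+k_2$ are disjoint (immediate since $S'\subset Q$ and $k_1\neq k_2$), so your witness $f$ is well defined.
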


\begin{proof}
Suppose $\mu$ is Lebesgue measure restricted to a set $E$ which is translation congruent to $Q$ and let $E_k$ be as in Definition \ref{def1.3}. Define the measurable map $\psi:Q\rightarrow E$, $\psi(x)=x+k$ for all $x\in E_k$ and $k\in\bz^d$. It is easy to see that the operator $\Psi: L^2(E)\rightarrow L^2(Q)$, $\Psi f=f\circ\psi$ is an isometric isomorphism and $\Psi(e_n|_E)=e_n|_{Q}$. So $\bz^d$ is a spectrum for $\mu$.

Conversely, suppose $\bz^d$ is a spectrum for $\mu$. Let $\varphi$ be as in Theorem \ref{th1.1}(ii). 

Define 
\begin{equation}
F_\varphi(t,x):=\sum_{k\in\bz^d}e_k(t)\varphi(k+x),\quad(t,x\in\br^d).
\label{eq2.4}
\end{equation}

The sum is absolutely convergent, from \eqref{eq1.1}. 

Note that for $n\in\bz^d$,
$$F_\varphi(t,x+n)=\sum_ke_k(t)\varphi(k+n+x)=\sum_{m}e_{m-n}(t)\varphi(m+x)=e_{-n}(t)F_\varphi(t,x).$$

We have 
$$c_\varphi(t):=\sum_{n\in\bz^d}|\widehat\varphi(t+n)|^2=\sum_{n\in\bz^d}\widehat{\varphi^*\ast\varphi}(t+n)=$$
$$=\sum_{k\in\bz^d}e_k(t)(\varphi^*\ast\varphi)(k)\quad (\mbox{by Poisson's summation formula, since }\varphi\in L^1(\br^d))$$
$$=\sum_{k\in\bz^d}e_k(t)\int_{\br^d}\cj{\varphi(y)}\varphi(y+k)\,dy=\int_{\br^d}\varphi(y)F_\varphi(t,y)\,dy=\sum_{n\in\bz^d}\int_Q\varphi(x+n)F_\varphi(t,x+n)\,dx$$
$$=\sum_{n\in\bz^d}\int_Q\varphi(x+n)e_{-n}(t)F_\varphi(t,x)\,dx=\int_Q\cj{F_\varphi(t,x)}F_\varphi(t,x)\,dx=\int_Q|F_\varphi(t,x)|^2\,dx.$$

To see that the interchange of the sum and the integral is justified, note that by \eqref{eq1.1}, we have $|F_\varphi|\leq 1$ and the sum that defines it is absolutely convergent. Also $\int_{\br^d}|\varphi(y)|\,dy=1$ (since $\varphi\geq 0$ and from \eqref{eq1.1}). 

Note that by the definition \eqref{eq2.4}, we get the following formula
\begin{equation}\label{eq2.1}
|F_\varphi(t,x)|=\left|\sum_ke_k(t)\varphi(x+k)\right|\leq \sum_{k\in\bz^d}\varphi(x+k)=1.
\end{equation}
So $$
c_\varphi(t)\leq\int_Q|F_\varphi(t,x)|^2\,dx\leq 1.
$$

We know (see \cite{JoPe98,DuJo07b}), that $\bz^d$ is a spectrum for $\mu$ iff $c_\varphi(t)=1$ for all $t\in\br^d$. But this implies that for all $t\in\br^d$, $|F_\varphi(t,x)|=1$ for Lebesgue a.e. $x$. 

Take $t\in\br^d$. Take $x\in\br^d$. Since $\sum_l\varphi(x+k)=1$, there exists $k_0$ such that $\varphi(x+k_0)\neq 0$. Since we must have equality in 
the triangle inequality in \eqref{eq2.1}, all the terms in the sum must differ by a real multiplicative constant; i.e., for all $k$ such that $\varphi(x+k)\neq 0$, we must have $e_k(t)\varphi(x+k)=\alpha_ke_{k_0}(t)\varphi(x+k_0)$ for some real $\alpha_k$. This implies that $e_k(t)=e_{k_0}(t)$. So $(k-k_0)\cdot t\in\bz$, and, picking several $t$'s with irrational components, it follows that $k$ must be equal to $k_0$. 

Thus, for a.e. $x$, there is a unique $k_0$ depending on $x$, such that $\varphi(x+k_0)\neq 0$. Since $\sum_k\varphi(x+k)=1$, it follows that $\varphi(x+k_0)=1$. This implies the desired conclusion.
\end{proof}

\begin{example}($d=1$)\label{ex1.4}
Let 
$$\varphi=\frac23\chi_{[0,1)}+\frac13\chi_{[1,2)}.$$
Let $d\mu=\varphi\,dx$. By Theorem \ref{th1.1}, the set $\{e_n : n\in\bz\}$ is orthogonal in $L^2(\mu)$. 
We have 
$$\widehat\mu(t)=\frac{1}{6\pi i t}\left(e^{2\pi i 2t}+e^{2\pi i t}-2\right),\quad(t\in\br).$$
This shows that $\widehat\mu(t)=0$ iff $t\in\bz$. From this we see that there is no $t\in\br\setminus\bz$ such that $e_t$ is orthogonal to $e_n$ for $n\in\bz$, because $\ip{e_t}{e_n}_{L^2(\mu)}=\widehat\mu(t-n)$.

Therefore $\bz$ yields a {\it maximal} set of orthogonal exponentials, which is incomplete (by Theorem \ref{th1.2}). 
\end{example}

Next, we give consider a different case, we characterize the measures that have spectrum $\{0,\dots,N-1\}$ for some finite integer $N$. The simplest example is of course $\frac{1}N\sum_{k=0}^{N-1}\delta_{1/k}$, where $\delta_x$ is the Dirac measure at $x$.

\begin{theorem}\label{th2.6}
Let $N\geq 2$ be an integer. Let $A$ be a set in $\br$ such that the atomic measure $\delta_A=\frac{1}{N}\sum_{a\in A}\delta_a$ has spectrum $\{0,1,\dots,N-1\}$. The $A$ is of the form $A=\frac{1}{N}A'$ where $A'$ is a compete set of representatives for $\bz/N\bz$. 
\end{theorem}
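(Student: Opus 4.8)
The plan is to reduce the spectral condition to a set of algebraic identities among the points of $A$ and then solve these using elementary symmetric functions. Since $\delta_A$ is a probability measure and each atom carries mass $1/N$, the set $A$ must consist of exactly $N$ distinct reals; write $A=\{a_0,\dots,a_{N-1}\}$ and put $z_j:=e^{2\pi i a_j}$, so each $z_j$ lies on the unit circle. The space $L^2(\delta_A)$ has dimension $N$, so the $N$ exponentials $e_0,\dots,e_{N-1}$ form a spectrum precisely when they are orthonormal (completeness is then automatic by a dimension count). Computing the inner products,
\[
\ip{e_m}{e_{m'}}_{L^2(\delta_A)}=\frac1N\sum_{j=0}^{N-1}e^{2\pi i(m-m')a_j}=\frac1N\sum_{j=0}^{N-1}z_j^{\,m-m'},
\]
and letting $m-m'$ range over $\{1,\dots,N-1\}$ (the negative differences follow by conjugation), orthonormality is equivalent to the vanishing of the power sums
\[
p_\ell:=\sum_{j=0}^{N-1}z_j^{\,\ell}=0,\qquad \ell=1,\dots,N-1.
\]

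First I would feed these identities into Newton's identities relating the $p_\ell$ to the elementary symmetric polynomials $e_1,\dots,e_{N-1}$ in $z_0,\dots,z_{N-1}$. Since $p_1=\dots=p_{N-1}=0$, an immediate induction gives $e_1=\dots=e_{N-1}=0$. Consequently the monic polynomial with roots $z_0,\dots,z_{N-1}$ collapses to
\[
\prod_{j=0}^{N-1}(w-z_j)=w^N+(-1)^N e_N=w^N-c,\qquad c:=(-1)^{N-1}e_N .
\]
Because each $|z_j|=1$ we have $|c|=\big|\prod_j z_j\big|=1$, so $c\neq0$; hence $w^N-c$ has $N$ distinct roots, which retroactively confirms that the $z_j$ are pairwise distinct (as is forced anyway by $L^2(\delta_A)$ being $N$-dimensional).

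It then remains to read off the $a_j$. Fixing a real $\beta$ with $e^{2\pi i N\beta}=c$, the roots of $w^N-c$ are exactly $\{e^{2\pi i(\beta+k/N)}:k=0,\dots,N-1\}$, equally spaced on the circle. Matching $z_j=e^{2\pi i a_j}$ to these roots yields, for each $j$, an integer $n_j$ and a residue $k_j$ with $a_j=\beta+\tfrac{k_j}{N}+n_j$ and $\{k_j\}=\{0,\dots,N-1\}$. Therefore $N(a_j-\beta)=k_j+Nn_j\in\bz$ and the numbers $N(a_j-\beta)$ form a complete set of representatives for $\bz/N\bz$; that is, $A-\beta=\tfrac1N A'$ with $A'$ as claimed. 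The translation by $\beta$ is forced and harmless: translating a measure multiplies $\widehat{\delta_A}$ by a unimodular factor and so preserves all orthogonality relations, whence spectra are translation invariant and the conclusion is naturally understood up to this common shift (the choice $\beta=0$ recovers the model $A=\tfrac1N\{0,1,\dots,N-1\}$).

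The main obstacle is the middle step: recognizing that the orthogonality relations say precisely that the atomic distribution $\tfrac1N\sum_j\delta_{z_j}$ on the circle has all Fourier coefficients of order $1,\dots,N-1$ equal to zero, and converting this harmonic condition into the rigid algebraic conclusion that the $z_j$ are a rotated copy of the $N$-th roots of unity. Newton's identities give the cleanest route; alternatively one argues that a probability measure supported on at most $N$ points whose first $N-1$ nontrivial Fourier coefficients vanish must be the normalized counting measure on a coset of the group of $N$-th roots of unity. The only genuine subtlety beyond this is the translation ambiguity above, which should be acknowledged so that the equality $A=\tfrac1N A'$ is read modulo a common additive constant.
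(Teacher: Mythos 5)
Your proof is correct, but it takes a genuinely different route from the paper's. The paper encodes the spectral condition as unitarity of the $N\times N$ matrix $\frac{1}{\sqrt N}\left(e^{2\pi i ak}\right)_{a\in A,\,k\in\{0,\dots,N-1\}}$ and then exploits the duality of a square matrix: orthonormal columns (the spectral condition on the $e_k$) force orthonormal rows, which reads as $\sum_{k=0}^{N-1}e^{2\pi i(a-a')k}=0$ for distinct $a,a'\in A$; hence $e^{2\pi i(a-a')}$ is a nontrivial $N$-th root of unity, so $a-a'\in\frac1N\bz\setminus\bz$, and the pigeonhole principle finishes. You instead stay entirely on the column side, interpret orthonormality as the vanishing of the power sums $p_1,\dots,p_{N-1}$ of the unimodular numbers $z_j=e^{2\pi i a_j}$, and use Newton's identities to collapse $\prod_j(w-z_j)$ to $w^N-c$, identifying the support as a rotated copy of the $N$-th roots of unity. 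The paper's duality trick is shorter; your argument makes the rigidity of the configuration explicit and generalizes more readily to statements about measures on the circle with prescribed vanishing Fourier coefficients. You also correctly flag a point that the paper's statement and proof gloss over: both arguments determine $A$ only up to a common additive constant. For instance, with $N=2$ the set $A=\{1/4,3/4\}$ has spectrum $\{0,1\}$, yet $2A=\{1/2,3/2\}$ is not a set of integers; the paper's pigeonhole step likewise only shows that the pairwise differences within $NA$ realize all nonzero residues, so the conclusion that $NA$ is a complete set of representatives for $\bz/N\bz$ must indeed be read modulo a common translation, exactly as you note.
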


\begin{proof}
It is easy to see, by writing the orthogonality of the exponential functions, that $A$ has spectrum $\{0,\dots,N-1\}$ iff the matrix 
$$\frac{1}{\sqrt{N}}\left(e^{2\pi i ak}\right)_{a\in A,k\in\{0,\dots,N-1\}}$$
is unitary. 

If $A$ has the form given in the statement of the theorem, then this matrix is unitary; it is the matrix of the Fourier transform on the group $\bz/N\bz$. 

For the converse, assume the matrix is unitary. Then for any pair of distinct points $a,a'$ in $A$, we must have
$$\sum_{k=0}^{N-1}e^{2\pi i(a-a')k}=0$$
so $e^{2\pi i(a-a')}$ is a root of the polynomial $\sum_{k=0}^{N-1}z^k$. Then $a-a'=\frac{l}{N}$ for some $l\in\bz$, not a multiple of $N$. Since there are $N$ elements in $A$, the pigeon hole principle implies that $NA$ is a complete set of representatives for $\bz/N\bz$.
\end{proof}

\begin{definition}\label{def2.6}
Given a Borel measure $\mu$ on $\br^d$, a family of Borel subsets $(E_i)_{i\in I}$ is called a {\it partition of $\mu$} if $\mu(\br^d\setminus\cup_i E_i)=0$ and $\mu(E_i\cap E_j)=0$ for all $i\neq j$. 
We say that two Borel measures $\mu$ and $\mu'$ are {\it translation equivalent} if there exists a partition $(E_i)_{i\in I}$ and some integers $(k_i)_{i\in I}$ of $\mu$ such that $(E_i+k_i)_{i\in I}$ is a partition of $\mu'$, and
the functions $E_i\ni x\mapsto x+k_i\in E_i+k_i$ map the measure $\mu$ into the measure $\mu'$.
\end{definition}

\begin{proposition}\label{pr2.7}
Let $\mu$ and $\mu'$ be two translation equivalent Borel probabiliy measures on $\br^d$. If $\mu$ has a spectrum $\Gamma$ contained in $\bz^d$, then $\mu'$ is also a spectral measure with spectrum $\Gamma$.
\end{proposition}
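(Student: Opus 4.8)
The plan is to build a single unitary operator $W : L^2(\mu) \to L^2(\mu')$ out of the translation equivalence and to show that it fixes each exponential $e_\gamma$ with $\gamma\in\Gamma\subset\bz^d$. Once this is done the conclusion is immediate: $W$ carries the orthonormal basis $\{e_\gamma : \gamma\in\Gamma\}$ of $L^2(\mu)$ onto $\{We_\gamma : \gamma\in\Gamma\}=\{e_\gamma : \gamma\in\Gamma\}$, which is therefore an orthonormal basis of $L^2(\mu')$; that is, $\Gamma$ is a spectrum for $\mu'$.

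To construct $W$, let $(E_i)_{i\in I}$ and the integers $(k_i)_{i\in I}$ be as in Definition \ref{def2.6}, so that $(E_i+k_i)_{i\in I}$ partitions $\mu'$ and translation by $k_i$ pushes $\mu|_{E_i}$ forward to $\mu'|_{E_i+k_i}$. Since $\mu$ and $\mu'$ are finite measures with pairwise-null overlaps, only countably many pieces carry positive mass, so the series below are honest series. For $f\in L^2(\mu)$ I would set $(Wf)(y):=f(y-k_i)$ for $y\in E_i+k_i$; this is well defined $\mu'$-almost everywhere because the sets $E_i+k_i$ partition $\mu'$.

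Then I would check that $W$ is unitary by splitting over the partition and invoking the transport-of-measure hypothesis on each piece:
\[
\int_{\br^d}|Wf|^2\,d\mu' = \sum_{i\in I}\int_{E_i+k_i}|f(y-k_i)|^2\,d\mu'(y) = \sum_{i\in I}\int_{E_i}|f(x)|^2\,d\mu(x) = \int_{\br^d}|f|^2\,d\mu,
\]
where the middle equality is exactly the statement that $x\mapsto x+k_i$ sends $\mu|_{E_i}$ onto $\mu'|_{E_i+k_i}$ (change of variables $y=x+k_i$). The same hypothesis read in the reverse direction, with translation by $-k_i$ carrying $\mu'|_{E_i+k_i}$ back to $\mu|_{E_i}$, furnishes a two-sided inverse $(W^{-1}g)(x)=g(x+k_i)$ for $x\in E_i$. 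Hence $W$ is a surjective isometry, i.e.\ unitary.

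The crucial step, and the only place where the hypothesis $\Gamma\subset\bz^d$ is used, is the evaluation of $We_\gamma$ for $\gamma\in\Gamma$. For $y\in E_i+k_i$,
\[
(We_\gamma)(y) = e_\gamma(y-k_i) = e^{2\pi i\gamma\cdot(y-k_i)} = e^{-2\pi i\gamma\cdot k_i}\,e_\gamma(y).
\]
Because $\gamma\in\bz^d$ and $k_i\in\bz^d$, the quantity $\gamma\cdot k_i$ is an integer, so $e^{-2\pi i\gamma\cdot k_i}=1$ and $(We_\gamma)(y)=e_\gamma(y)$ for $\mu'$-a.e.\ $y$, i.e.\ $We_\gamma=e_\gamma$ in $L^2(\mu')$. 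I expect this phase cancellation to be the conceptual heart of the argument: it is precisely what makes the piecewise integer translations invisible to the characters $e_\gamma$, and it explains why the statement requires $\Gamma$ to lie inside $\bz^d$ rather than in an arbitrary subset of $\br^d$ (for non-integer shifts the phases $e^{-2\pi i\gamma\cdot k_i}$ would not collapse to $1$, and $W$ would no longer fix the exponentials). Everything else is routine bookkeeping with the partition.
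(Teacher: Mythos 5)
Your proof is correct and is essentially the same as the paper's: the paper defines $\psi(x)=x+k_i$ on $E_i$ and the composition operator $\Psi(f)=f\circ\psi$ from $L^2(\mu')$ to $L^2(\mu)$, which is your $W$ read in the opposite direction, and the key point in both arguments is that integer shifts $k_i$ leave $e_\lambda$ invariant for $\lambda\in\bz^d$. You simply spell out the unitarity computation and the phase cancellation that the paper leaves as ``easy to check.''
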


\begin{proof}
Let $(E_i)_{k\in\bz^d}$ be a partition of $\mu$ and $(k_i)_{i\in I}$ as in Definition \ref{def2.6}. Define the map $\psi:\br^d\rightarrow\br^d$, $\psi(x)=x+k_i$ for $x\in E_i$. Then it is easy to check that the map 
$\Psi:L^2(\mu')\rightarrow L^2(\mu)$, $\Psi(f)=f\circ\psi$, is an isometric isomorphism with the property that 
$\Psi(e_\lambda)=e_\lambda$ for all $\lambda\in\bz^d$. Since $\Gamma$ is contained in $\bz^d$, it follows that it is also a spectrum for $\mu'$. 
\end{proof}

\begin{remark}\label{rem2.8}
Theorems \ref{th1.2} and \ref{th2.6} might lead one think that if two measures $\mu$ and $\mu'$ have a common spectrum $\Gamma$ contained in $\bz$, then they must be translation equivalent. However, this is not true, as the following example shows. 

Consider the atomic measures $\delta_A$ and $\delta_{A'}$, where $A=\left\{0,\frac18,\frac48,\frac58\right\}$ and $A'=\left\{0,\frac38,\frac48,\frac78\right\}$. They have the common spectrum $\Gamma=\{0,1,4,5\}$. This can be seen by computing the matrices, $\frac{1}{\sqrt{4}}(e^{2\pi i a\lambda})_{a\in A,\lambda\in\Gamma}$ and similarly for $A'$, with $\rho=e^{2\pi i/8}$:
$$\frac{1}{\sqrt{4}}\begin{pmatrix}
	1&1&1&1\\
	1&\rho&-1&-\rho\\
	1&-1&1&-1\\
	1&-\rho&-1&\rho
\end{pmatrix},\quad\frac{1}{\sqrt{4}}\begin{pmatrix}
	1&1&1&1\\
	1&\rho^3&-1&-\rho^3\\
	1&-1&1&-1\\
	1&-\rho^3&-1&\rho^3
\end{pmatrix}$$
which are unitary. However, the measures $\delta_A$ and $\delta_{A'}$ are not translation equivalent.
\end{remark}

\begin{example}\label{ex2.9}
Let $\mu$ be the invariant measure associated to the affine iterated function system $\tau_0(x)=x/4$, $\tau_2(x)=(x+2)/4$. See \cite{JoPe98,DJ06b} for definition and details. It is proved is \cite{JoPe98} that this measure has spectrum 
$$\Gamma=\left\{\sum_{k=0}^n4^k a_k : a_k\in \{0,1\}, n\in\bn\right\}.$$
Consider also the invariant measure $\mu'$ for the affine iterated function system $\tau_0(x)=x/4$, $\tau_{10}(x)=(x+10)/4$. With the notations in \cite{DJ06b}, $R=4$ and $B=\{0,10\}$. Then, with $L:=\{0,1\}$, we have a Hadamard pair $(B,L)$. We want to show that $\mu'$ has the same spectrum $\Gamma$. For this we use \cite[Theorem 8.4]{DJ06b} and check that there are no extreme cycles (or $m_B$-cycles as they are called in \cite{DJ06b}) for this system. These are finite sets of points $\{x_0,x_1,\dots,x_{p-1}\}$ such that $|m_B(x_i)|=1$ for all $i$, where 
$$m_B(x)=\frac{1}{2}(1+e^{2\pi i 10x}),$$
and for all $i$ there exist $l_i\in L$ such that $\frac14(x_i+l_i)=x_{i+1}$, where $x_{p}:=x_0$. 

Such a cycle will be contained in the attractor $X(L)$ of the affine iterated function system $\sigma_0(x)=x/4$, $\sigma_1(x)=(x+1)/4$, so it will contained in the interval $[0,1/3]$. 
Also, since $|m_B(x_i)| =1$, we have that $x_i=k/10$ for some $k\in\bz$. Therefore the only non-zero candidates are $1/10,2/10,3/10$. It can be checked that none of them is an $m_B$-cycle. 
Then \cite[Theorem 8.4]{DJ06b} implies that $\Gamma$ is a spectrum for $\mu'$. 

We show that $\mu'$ is not translation congruent to $\mu$. 

Note that $\mu'$ is supported inside $[0,\sum_{k=1}^\infty10/4^k]=[0,10/3]$. Also $10/3$ is a fixed point for $\tau_{10}$, so there is a small interval around $10/3$, for example $\tau_{10}^n[0,10/3]$, that has positive $\mu'$ measure. 
$10/3$ is congruent to $1/3$ modulo $\bz$. However the measure $\mu$ is supported inside the interval $[0,2/3]$, and after one iteration, we can see that it is also supported inside $[0,2/12]\cup[6/12,8/12]$. But $1/3=4/12$ is at positive distance from this union, and therefore no piece in the support of $\mu$ can cover the piece of $\mu'$ around $10/3$.

\end{example}
\begin{proposition}\label{pr2.8}
Let $\mu$ be a Borel probability measure with spectrum $\Gamma$ contained in $\bz^d$. Then $\mu$ is translation equivalent with a measure supported on $Q:=[0,1)^d$. 

\end{proposition}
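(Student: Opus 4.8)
The plan is to fold $\mu$ into the cube by the fractional part map and to show that \emph{completeness} of the spectrum forces the folded pieces to be mutually singular, which is exactly what is needed to realize the folding as a translation equivalence in the sense of Definition \ref{def2.6}. Write $r\colon\br^d\to Q$, $r(x)=x-\lfloor x\rfloor$ (componentwise integer part), so that $\br^d=\bigsqcup_{k\in\bz^d}(Q+k)$ and $r$ restricts to the bijection $x\mapsto x-k$ on each $Q+k$. Let $\mu':=r_*\mu$ be the push-forward; it is a Borel probability measure supported on $Q$, and it is the candidate measure in the statement. The key elementary observation is that, because $\Gamma\subseteq\bz^d$, every $e_\lambda$ with $\lambda\in\Gamma$ is $\bz^d$-periodic, so $e_\lambda\circ r=e_\lambda$ pointwise.

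First I would show that $V\colon L^2(\mu')\to L^2(\mu)$, $Vf=f\circ r$, is a unitary. By the change-of-variables formula for a push-forward, $\|Vf\|_{L^2(\mu)}=\|f\|_{L^2(\mu')}$, so $V$ is an isometry with closed range; and by the periodicity above $V(e_\lambda|_Q)=e_\lambda$ for every $\lambda\in\Gamma$. Since $\{e_\lambda:\lambda\in\Gamma\}$ is an ONB for $L^2(\mu)$, its members lie in the range of $V$, forcing $V$ to be onto, hence unitary. (Incidentally this already shows $\Gamma$ is a spectrum for $\mu'$.)

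The heart of the argument is to convert surjectivity of $V$ into a disjointness statement. Let $\nu_k$ denote the push-forward to $Q$ of $\mu|_{Q+k}$ under $x\mapsto x-k$, so that $\mu'=\sum_{k}\nu_k$. For fixed $k_0$ the bounded function $\chi_{Q+k_0}\in L^2(\mu)$ must, by surjectivity, equal $f_{k_0}\circ r$ for some $f_{k_0}\in L^2(\mu')$; unravelling this on each $Q+k$ gives $f_{k_0}=1$ $\nu_{k_0}$-a.e. and $f_{k_0}=0$ $\nu_k$-a.e. for $k\neq k_0$. Hence each $\nu_{k_0}$ is carried by $B_{k_0}:=\{f_{k_0}\neq0\}\subseteq Q$ while $\nu_k(B_{k_0})=0$ for $k\neq k_0$; that is, the $\nu_k$ are mutually singular. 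This is the step I expect to be the main obstacle, and it is genuinely where completeness (not mere orthogonality) enters: for the incomplete example $\mu=\tfrac12\lambda|_{[0,2)}$ of the Remark one has $\nu_0=\nu_1$, which are not singular, and indeed $\mu$ cannot be folded disjointly into $Q$.

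Finally I would assemble the translation equivalence. Disjointifying the carriers (replace $B_{k}$ by $B_k$ minus the earlier ones in an enumeration of $\bz^d$, which alters each $B_k$ by a $\nu_k$-null set), I obtain pairwise disjoint Borel sets $B_k\subseteq Q$ with $\nu_k(Q\setminus B_k)=0$. Put $E_k:=B_k+k\subseteq Q+k$ and $k_i:=-k$. Then $(E_k)_{k\in\bz^d}$ is a partition of $\mu$ (the $E_k$ are genuinely disjoint and $\mu(\br^d\setminus\bigcup_k E_k)=\sum_k\nu_k(Q\setminus B_k)=0$), the translates $E_k-k=B_k$ form a partition of $\mu'$ supported on $Q$, and each translation $E_k\ni x\mapsto x-k$ carries $\mu|_{E_k}$ to $\nu_k=\mu'|_{B_k}$. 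This is exactly Definition \ref{def2.6}, so $\mu$ is translation equivalent to the measure $\mu'$ supported on $Q$.
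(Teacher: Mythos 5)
Your proof is correct, but it takes a genuinely different route from the paper's. The paper first shows that the support of $\mu$ cannot contain two sets $O$ and $O+k$ (with $k\in\bz^d\setminus\{0\}$ and $\mu(O)>0$): it invokes a local-translation-invariance result, \cite[Proposition 2.2]{DHJ09}, to get $\mu(O+k)=\mu(O)>0$ and $\int e_\lambda\chi_O\,d\mu=\int e_\lambda\chi_{O+k}\,d\mu$ for all $\lambda$, so that $\chi_O-\chi_{O+k}$ is orthogonal to every exponential, contradicting completeness; it then partitions via $E_k=\mbox{supp}(\mu)\cap(Q+k)$ and folds as in \eqref{eq1.2}. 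You instead work entirely inside $L^2$: the folding operator $Vf=f\circ r$ is an isometry whose range contains the ONB $\{e_\lambda:\lambda\in\Gamma\}$ (since each $e_\lambda$ is $\bz^d$-periodic), hence $V$ is unitary, and you extract mutual singularity of the folded pieces $\nu_k$ by pulling the indicators $\chi_{Q+k_0}$ back through $V$. Completeness enters at exactly the analogous point in each argument (orthogonality of $\chi_O-\chi_{O+k}$ to all exponentials in the paper; surjectivity of $V$ in yours), and both end by folding $\mu$ onto $Q$. What your version buys is self-containedness --- no appeal to the external local-translation-invariance proposition --- and a more careful treatment of null sets, since you work with carriers of the $\nu_k$ rather than with topological supports; your final disjointification step is the right way to turn mutual singularity into an honest partition in the sense of Definition \ref{def2.6}, a point the paper's proof passes over quickly.
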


\begin{proof}
First, we prove that we cannot have a Borel set $O$ such that $O$ and $O+k$ are contained in the support of $\mu$, for some $k\in\bz^d$, $k\neq 0$ and $\mu(O)>0$. 
Assume by contradiction that this is the case. Then all the hypotheses in \cite[Proposition 2.2]{DHJ09} are satisfied, so $\mu$ is locally translation invariant, and therefore we have $\mu(O+k)=\mu(O)>0$ and, for all $\lambda\in\bz$
$$\int e_\lambda\chi_O\,d\mu=\int e_\lambda(x-k)\chi_O(x-k)\,d\mu(x)=\int e_\lambda \chi_{O+k}\,d\mu.$$
This implies that $e_\lambda\perp (\chi_O-\chi_{O+k})$. But since $\Gamma$ is a spectrum, this yields a contradiction. 

Thus, if $O$ is contained in the support of $\mu$ and has positive measure then $O+k$ must have measure zero, if $k\in\bz^d$, $k\neq 0$. 

We can then partition the measure $\mu$ into $E_k=\mbox{supp}(\mu)\cap (Q+k)$, and define the measure $\tilde\mu$ as in \eqref{eq1.2}. The property above implies that $\tilde\mu$ is translation equivalent to $\mu$ and of course it is supported on $Q$. 
\end{proof}

\begin{definition}\label{defft}\cite{JoPe99}. 
Let $\mu$ be a spectral probability measure on $\br^n$, with spectrum $\Gamma$ a subset of $\br^n$.

Define the Fourier transform $\F:L^2(\mu)\rightarrow l^2(\Gamma)$ by 
$$(\F f)(\lambda)=\ip{f}{e_\lambda},\quad(f\in L^2(\mu),\lambda\in\Gamma).$$
Then $\F$ is unitary and  
$$\F^{-1}(c_\lambda)_\lambda=\sum_{\lambda\in\Gamma}c_\lambda e_\lambda.$$

Define the group of transformations $(U(t))_{t\in\br^n}$ on $L^2(\mu)$ by
$$U(t)f=\F^{-1}((e_t(\lambda)\F f(\lambda))_\lambda)=\sum_{\lambda} [e^{2\pi i t\cdot\lambda}\ip{f}{e_\lambda}] e_\lambda.$$
The convergence of the sum is in $L^2(\mu)$.
This means that in the ``Fourier domain'', $\hat U(t):=\F U(t) \F^{-1}$ is just multiplication by the sequence $(e^{2\pi i t\cdot\lambda})_\lambda$. We call $(U(t))_{t\in\br^n}$ the {\it group of local translations}.

Note also that 
\begin{equation}
	U(t)e_\lambda=e_\lambda(t)e_\lambda,\quad(t\in \br^n,\lambda\in\Gamma).
	\label{eqeig}
\end{equation}
Note that $U(t)$ depends on the spectrum $\Gamma$. 
\end{definition}

The next proposition shows that, for two iso-spectral measures, the natural map that interchanges the exponential bases will also intertwine the groups of local translations. 

\begin{proposition}\label{pr2.10}
Let $\mu$ and $\mu'$ be two spectral measures having the same spectrum $\Gamma$. Define the map $\Psi:L^2(\mu)\rightarrow L^2(\mu')$, $\Psi(e_\lambda)=e_\lambda$, for all $\lambda\in\Gamma$. (Note that, in general, $\Psi$ is not the identity map, it just maps the restriction of the exponential function $e_\lambda$ to the support of $\mu$ to the restriction of $e_\lambda$ to the support of $\mu'$). Let $\F_\mu$ and $\F_{\mu'}$ be the Fourier transforms and let $(U_\mu(t))_{t\in\br^d}$ and $(U_{\mu'}(t))_{t\in\br^d}$ be the corresponding groups of local translations. 

Then $\Psi$ is an isometric isomorphism, $\Psi=\F_{\mu'}^{-1}\F_\mu$ and $\Psi$ intertwines $U_\mu$ and $U_{\mu'}$, i.e.
\begin{equation}
\Psi U_\mu(t)=U_{\mu'}(t)\Psi,\quad(t\in\br^d)
\label{eq2.7}
\end{equation}
\end{proposition}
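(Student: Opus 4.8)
The plan is to establish the three assertions of Proposition \ref{pr2.10} in sequence, exploiting the fact that $\F_\mu$ and $\F_{\mu'}$ are unitary by Definition \ref{defft}, and that both measures share the \emph{same} spectrum $\Gamma$ and hence the same target Hilbert space $l^2(\Gamma)$. First I would verify the identity $\Psi = \F_{\mu'}^{-1}\F_\mu$. Since $\F_\mu(e_\lambda) = (\ip{e_\lambda}{e_{\lambda'}}_{L^2(\mu)})_{\lambda'\in\Gamma}$ and $\Gamma$ is orthonormal in $L^2(\mu)$, we get $\F_\mu(e_\lambda) = \delta_\lambda$, the standard basis vector in $l^2(\Gamma)$; likewise $\F_{\mu'}(e_\lambda) = \delta_\lambda$, so $\F_{\mu'}^{-1}(\delta_\lambda) = e_\lambda$ in $L^2(\mu')$. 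Composing gives $\F_{\mu'}^{-1}\F_\mu(e_\lambda) = e_\lambda$, which agrees with $\Psi$ on the exponentials. Because $\{e_\lambda : \lambda\in\Gamma\}$ is an ONB of $L^2(\mu)$, the two maps agree on a total set, hence everywhere, establishing $\Psi = \F_{\mu'}^{-1}\F_\mu$.

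From this factorization the isometric isomorphism claim is immediate: $\F_\mu : L^2(\mu)\to l^2(\Gamma)$ and $\F_{\mu'} : L^2(\mu')\to l^2(\Gamma)$ are both unitary, so their composition $\F_{\mu'}^{-1}\F_\mu$ is a unitary map from $L^2(\mu)$ onto $L^2(\mu')$; thus $\Psi$ is an isometric isomorphism. I would note that this is really the conceptual content — once we see $\Psi$ as a composition of Fourier transforms through the shared sequence space, everything reduces to bookkeeping.

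For the intertwining relation \eqref{eq2.7}, the cleanest route is to pass to the Fourier domain where, by Definition \ref{defft}, $\hat U_\mu(t) = \F_\mu U_\mu(t)\F_\mu^{-1}$ and $\hat U_{\mu'}(t) = \F_{\mu'} U_{\mu'}(t)\F_{\mu'}^{-1}$ are \emph{the same} operator on $l^2(\Gamma)$, namely multiplication by the sequence $(e^{2\pi i t\cdot\lambda})_{\lambda\in\Gamma}$, since this multiplier depends only on $\Gamma$ and not on which measure we started from. Writing $M_t$ for this common multiplication operator, we have $U_\mu(t) = \F_\mu^{-1} M_t \F_\mu$ and $U_{\mu'}(t) = \F_{\mu'}^{-1} M_t \F_{\mu'}$. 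Then
$$\Psi U_\mu(t) = \F_{\mu'}^{-1}\F_\mu \cdot \F_\mu^{-1} M_t \F_\mu = \F_{\mu'}^{-1} M_t \F_\mu = \F_{\mu'}^{-1} M_t \F_{\mu'} \cdot \F_{\mu'}^{-1}\F_\mu = U_{\mu'}(t)\Psi,$$
which is exactly \eqref{eq2.7}. Alternatively, one can verify the intertwining directly on the basis: by \eqref{eqeig}, $U_\mu(t)e_\lambda = e_\lambda(t)e_\lambda$ and $U_{\mu'}(t)e_\lambda = e_\lambda(t)e_\lambda$, so $\Psi U_\mu(t)e_\lambda = e_\lambda(t)\Psi(e_\lambda) = e_\lambda(t)e_\lambda = U_{\mu'}(t)e_\lambda = U_{\mu'}(t)\Psi(e_\lambda)$, and since both sides are bounded and agree on the ONB $\{e_\lambda\}$ they are equal.

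The only point requiring genuine care — the main (modest) obstacle — is the identification $\F_\mu(e_\lambda) = \delta_\lambda$, since it hinges on $\Gamma$ being orthonormal (not merely total) in each $L^2$ space; this is guaranteed because $\Gamma$ is a spectrum for both measures, so $\ip{e_\lambda}{e_{\lambda'}} = \delta_{\lambda,\lambda'}$ in both. Everything else is formal manipulation of unitaries. I expect no real difficulty beyond keeping straight that the eigenvalue $e_\lambda(t)$ attached to $e_\lambda$ under the local translation is the same on both sides precisely because the spectra coincide — which is the whole reason the proposition holds.
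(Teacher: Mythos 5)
Your proof is correct and follows essentially the same route as the paper's: identify $\F_\mu e_\lambda=\delta_\lambda=\F_{\mu'}e_\lambda$ to get $\Psi=\F_{\mu'}^{-1}\F_\mu$, deduce unitarity from the factorization, and obtain the intertwining by conjugating the common multiplication operator (the paper leaves this last step as ``a simple computation''). You have merely written out the details the paper omits.
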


\begin{proof}

 We have $\F_\mu e_\lambda=\delta_\lambda=\F_{\mu'}e_\lambda$. Since $\{e_\lambda : \lambda\in\Gamma\}$ is an ONB, it follows that $\Psi=\F_{\mu'}^{-1}\F_\mu$. The rest follows by a simple computation.

\end{proof}

\medskip
{\bf Conclusion.} Our main result (Theorem \ref{th1.2}) is a characterization of probability measures $\mu$  supported in $\br^d$ which allow $L^2(\mu)$-orthogonal Fourier series indexed by $\bz^d$. We argue how our result fits into a wider context of making links between geometric shapes, on one side, and spectral data on the other. Here we are concerned with Fourier frequencies on one side of the divide, and geometric measure theory on the other.
\begin{acknowledgements}
The authors thank an anonymous referee for very helpful suggestions, much improving the paper, both in form and substance.
\end{acknowledgements}
\bibliographystyle{alpha}
\bibliography{2d}

\end{document}